\newtheorem{prop}{Proposition} 
\theoremstyle{definition} 
\newtheorem{ex}{Example}[section]
\theoremstyle{remark} \newtheorem{rem}{Remark}[section]
\newcommand{\pn}{\par\noindent} \newcommand{\pmn}{\par\medskip\noindent}
\begin{document}
\title{An easy proof of Polya's theorem on random walks}
\author{Yury Kochetkov}
\date{}
\begin{abstract} We present an easy proof of Polya's theorem on
random walks: with the probability one a random walk on the
two-dimensional lattice returns to the starting point.
\end{abstract}
\email{yukochetkov@hse.ru, yuyukochetkov@gmail.com} \maketitle

\section{Introduction}
\pn We consider two-dimensional random walks that start at the
origin and performed on the lattice $\mathbb{Z}^2$. There are four
directions for each step and the choice of a direction is random
with probability $1/4$ for each direction. We will consider only
finite walks. A given walk of length $n$ is performed with
probability $1/2^{2n}$. \pmn A \emph{loop} is a walk that begins
and ends at origin. It is convenient to consider the walk of
length zero --- the \emph{trivial} loop. A loop is \emph{simple},
if it is not a concatenation of two nontrivial loops. We will
assume that a simple loop is nontrivial. \pmn Obviously, a loop
has an even length. Let $P_n$ be the number of simple loops of
length $2n$. It is easy to see that $P_1=4$ and $P_2=20$. The
number
$$r=\dfrac{P_1}{2^4}+\dfrac{P_2}{2^8}+\ldots=\sum_{n=1}^\infty
\dfrac{P_n}{2^{4n}}\leqslant 1$$ is the probability of returning
to the origin. \pmn {\bf Polya's theorem on random walk}. $r=1$.
\pmn There are many proofs of this theorem. See, for example,
\cite{DS}, \cite{SSU}, \cite{No}, \cite{La}.

\section{The one-dimensional case}
\pn In our approach the reasoning in the one-dimensional and the
reasoning in the two-dimensional cases are the same, so we will
study the one-dimensional case at first. \pn Let $B_n$ be the
number of loops of the length $2n$, $P_n$ be the number of simple
loops of the same length and
$$B(t)=1+B_1t+B_2t^2+\ldots\quad\text{and}\quad
P(t)=P_1t+P_2t^2+\ldots$$ be the corresponding generating
functions. As each loop is the concatenation of the simple loop
and a loop (maybe trivial), then $B(t)=P(t)\cdot B(t)+1$. Thus,
$P(t)=1-1/B(t)$ in the ring of formal series. \pmn The probability
of a given walk of the length $n$ is $1/2^n$, hence, we will work
with "weighted" generating functions
$$b(x)=B\left(\frac x4\right)=1+\dfrac{B_1}{2^2}\,x+\dfrac{B_2}{2^4}\,x^2+
\ldots\quad\text{and}\quad p\,(x)=P\left(\frac
x4\right)=\dfrac{P_1}{2^2}\,x+ \dfrac{P_2}{2^4}\,x^2+\ldots$$ Then
$$r=p\,(1)=\dfrac{P_1}{2^2}+\dfrac{P_2}{2^4}+\ldots\leqslant 1$$ is
the probability of returning to the origin. So, the convergence
radius of the series $p(x)$ is $\geqslant 1$ \pmn As
$B_n=\binom{2n}{n}$, then the convergence radius of the series
$b(x)$ is one. Indeed,
$$\lim_{n\to \infty} \binom{2n+2}{n+1}\cdot 2^{-(2n+2)}:
\binom{2n}{n}\cdot 2^{-2n}=1.$$ It means that $p(x)=1-1/b(x)$ as
functions, if $0<x<1$. As $\lim_{x\to 1} p(x)=r$, then
$$r=1-\dfrac{1}{\,\,\overset{\vspace{1mm}}{\lim_{x\to 1}
b(x)}}\,.$$ But
$$B_n/2^{2n}=\dfrac{(2n)!}{(n!)^2 2^{2n}} \sim
\dfrac{\sqrt{4\pi n}\cdot 2^{2n}\cdot n^{2n}}{2\pi n\cdot n^{2n}
\cdot 2^{2n}}= \frac{1}{\sqrt{\pi n}}\,.$$ Thus, $\lim_{x\to
1}b(x)=\infty$ and $r=1$.

\section{The two-dimensional case}
\pn As above, let $B_n$ be the number of loops of length $2n$,
$P_n$ be the number of simple loops of the same length,
$$B(t)=1+B_1t+B_2t^2+\ldots\quad\text{and}\quad
P(t)=P_1t+P_2t^2+\ldots$$ be the corresponding generating
functions.  Then
$$r=\dfrac{P_1}{4^2}+\dfrac{P_2}{4^4}+\dfrac{P_3}{4^6}+\ldots
\leqslant 1$$ is the probability of the returning to the origin.
As above we have that $B(t)=P(t)\cdot B(t)+1$ and $P(t)=1-1/B(t)$
in the ring of formal series. We have to compute the number $B_n$.

\begin{prop} $B_n=\binom{2n}{n}^2$.\end{prop}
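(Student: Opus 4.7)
The plan is to use the classical ``diagonal rotation'' bijection that decouples a two-dimensional lattice walk into two independent one-dimensional walks, reducing the count to the one-dimensional case already handled.

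First I would introduce the rotated coordinates $u=x+y$ and $v=x-y$. Each of the four unit steps in the original lattice changes $u$ and $v$ each by $\pm 1$, namely
$$N=(0,1)\mapsto(+1,-1),\ S=(0,-1)\mapsto(-1,+1),\ E=(1,0)\mapsto(+1,+1),\ W=(-1,0)\mapsto(-1,-1).$$
The key observation is that the assignment $\{N,S,E,W\}\to\{+1,-1\}^2$, ``step $\mapsto (\Delta u,\Delta v)$'', is a bijection. Consequently, choosing a sequence of $2n$ steps in the two-dimensional walk is equivalent to independently choosing a sequence of $2n$ signs for the $u$-coordinate and a sequence of $2n$ signs for the $v$-coordinate.

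Next I would note that the two-dimensional walk returns to the origin $(x,y)=(0,0)$ if and only if $(u,v)=(0,0)$, which happens if and only if the $u$-walk and the $v$-walk are each one-dimensional loops of length $2n$. From the one-dimensional case we already know that the number of $\pm 1$ loops of length $2n$ equals $\binom{2n}{n}$ (each such loop has $n$ steps $+1$ and $n$ steps $-1$). Since the $u$-walk and the $v$-walk can be chosen independently, the total count is
$$B_n=\binom{2n}{n}\cdot\binom{2n}{n}=\binom{2n}{n}^2.$$

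There is no substantive obstacle: the only thing that must be verified carefully is that the map from steps to pairs of signs is indeed a bijection, and that a loop in $(x,y)$ corresponds exactly to a pair of loops in $(u,v)$. Both are immediate from the coordinate change above, since the linear map $(x,y)\mapsto(x+y,x-y)$ is invertible and sends $(0,0)$ to $(0,0)$.
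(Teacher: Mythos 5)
Your proposal is correct and is essentially the paper's own argument: the paper's assignment of sign pairs $(a[i],b[i])$ to the four step directions is exactly your map step $\mapsto(\Delta u,\Delta v)$ for $u=x+y$, $v=x-y$, and both proofs reduce the count to two independent balanced $\pm1$ sequences, each counted by $\binom{2n}{n}$. The only cosmetic difference is that you justify the loop correspondence via invertibility of the rotation, while the paper checks it by counting the four pair types directly.
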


\begin{proof} Given two strings $a$ and $b$ of $+1$
and $-1$ of length $2n$ , such that there are equal number of plus
and minus ones in each string, we must construct a loop of length
$2n$. It can be done in the following way: the pair $(a[i],b[i])$
corresponds to the $i$-th step
\begin{itemize} \item in the direction $(0,1)$, if
$(a[i],b[i])=(-1,+1)$; \item in the direction $(0,-1)$, if
$(a[i],b[i])=(+1,-1)$; \item in the direction $(1,0)$, if
$(a[i],b[i])=(+1,+1)$; \item and in the direction $(-1,0)$, if
$(a[i],b[i])=(-1,-1)$. \end{itemize} Thus constructed walk returns
to the origin. Indeed, let the number of pairs $(+1,+1)$ is $k$,
the number of pairs $(-1,-1)$ is $l$, the number of pairs
$(+1,-1)$ is $m$ and the number of pairs $(-1,+1)$ is $n$, then
$k+m=l+n$ and $k+n=l+m$. Hence, $2k=2l$, i.e. $k=l$ and $m=n$. So,
the number of "right" steps is equal to the number of "left" steps
and the number of "up" steps is equal to the number of "down"
steps. \end{proof}

\begin{ex}
\[\begin{picture}(210,90) \put(0,22){$+\,+\,-\,-\,-\,+$}
\put(0,33){$+\,-\,-\,-\,+\,+$} \put(78,28){$\Rightarrow$}
\qbezier[60](100,30)(155,30)(210,30)
\qbezier[50](150,5)(150,45)(150,85) \put(205,20){\small x}
\put(142,77){\small y} \multiput(110,30)(20,0){5}{\circle*{2}}
\multiput(110,10)(20,0){5}{\circle*{2}}
\multiput(110,50)(20,0){5}{\circle*{2}}
\multiput(110,70)(20,0){5}{\circle*{2}}
\put(152,30){\vector(1,0){16}} \put(170,32){\vector(0,1){16}}
\put(168,50){\vector(-1,0){16}} \put(148,50){\vector(-1,0){16}}
\put(130,48){\vector(0,-1){16}} \put(132,30){\vector(1,0){16}}
\end{picture}\] In the opposite way,
\[\begin{picture}(210,90) \qbezier[60](0,30)(55,30)(110,30)
\qbezier[50](50,5)(50,45)(50,85) \put(105,20){\small x}
\put(42,77){\small y} \multiput(10,30)(20,0){5}{\circle*{2}}
\multiput(10,10)(20,0){5}{\circle*{2}}
\multiput(10,50)(20,0){5}{\circle*{2}}
\multiput(10,70)(20,0){5}{\circle*{2}}
\put(52,30){\vector(1,0){16}} \put(69,32){\vector(0,1){16}}
\put(71,48){\vector(0,-1){16}} \put(70,28){\vector(0,-1){16}}
\put(68,10){\vector(-1,0){16}} \put(50,12){\vector(0,1){16}}
\put(125,28){$\Rightarrow$} \put(150,33){$+\,-\,+\,+\,-\,-$}
\put(150,22){$+\,+\,-\,-\,-\,+$}
\end{picture}\] \end{ex}

\begin{rem} A similar bijection does not exist in the three
dimensional case, because there are eight triads of plus and minus
ones, but only six directions in the three dimensional lattice.
\end{rem}
\pmn As above, let
$$b(x)=B\left(\frac{x}{16}\right)=1+\dfrac{B_1}{4^2}\,x+
\dfrac{B_2}{4^4}\,x^2+ \ldots\quad\text{and}\quad
p\,(x)=P\left(\frac{x}{16}\right)=\dfrac{P_1}{4^2}\,x+
\dfrac{P_2}{4^4}\,x^2+\ldots$$ \pmn \emph{The proof of Polya's
theorem.} As coefficients of the series $b(x)$ are squares of the
coefficients of the same series in the one-dimensional case, then
the radius of convergence of this series is one and its
coefficients are equivalent to $1/\pi n$. Thus this series is
divergent at one and
$$r=\lim_{x\to 1}p(x)=1-1/\lim_{x\to 1} b(x)=1.$$ \qed

\vspace{5mm}
\end{document}